\documentclass[graybox]{svmult}
\usepackage{mathptmx}       
\usepackage{helvet}         
\usepackage{courier}        
\usepackage{makeidx}         
\usepackage{graphicx}        
\usepackage{multicol}        
\usepackage[bottom]{footmisc}
\usepackage{amsmath,amssymb,amsfonts}        
\makeindex             




\usepackage{amssymb}
\usepackage{amsmath,amscd,amsxtra}
\usepackage{stmaryrd}
\usepackage{mathrsfs}
\usepackage{verbatim}


\usepackage[hidelinks]{hyperref}
\usepackage{latexsym}
\usepackage{enumerate}
\usepackage[boxed,vlined,ruled,linesnumbered,algochapter]{algorithm2e}
\usepackage{epigraph}
\usepackage{makeidx}
\usepackage[toc,page]{appendix}
\usepackage{color}
\usepackage{mathdots}
\usepackage{rotating}
\usepackage{subcaption}
\usepackage{bibentry}



 \newcommand{\be}{\begin{equation}}
\newcommand{\ee}{\end{equation}}
\newcommand{\bes}{\begin{equation*}}
\newcommand{\ees}{\end{equation*}}

\newcommand{\rd}{\mathrm{d}}
\newcommand{\ba}{\begin{array}}
\newcommand{\ea}{\end{array}}
\newcommand{\bg}{\begin{gathered}}
\newcommand{\eg}{\end{gathered}}
\def\bal#1\eal{\begin{align}#1\end{align}}
\def\bals#1\eals{\begin{align*}#1\end{align*}}

\newcommand{\bebg}{\begin{equation} \begin{gathered}}
\newcommand{\egee}{\end{gathered} \end{equation}}
\newcommand{\besbg}{\begin{equation*} \begin{gathered}}
\newcommand{\egees}{\end{gathered} \end{equation*}}

\newcommand{\beba}{\begin{equation} \begin{array}}
\newcommand{\eaee}{ \end{array} \end{equation}}
\newcommand{\besba}{\begin{equation*} \begin{array}}
\newcommand{\eaees}{\end{array} \end{equation*}}
\newcommand{\bn}{\mathbf{n}}
\newcommand{\bt}{\begin{Theorem}}
\newcommand{\et}{\end{Theorem}}
\newcommand{\bT}{\begin{theorem}}
\newcommand{\bTtext}[1]{\begin{Theorem}[#1] $ $}
\newcommand{\eT}{\end{theorem}}
\newcommand{\bL}{\begin{lemma}}
\newcommand{\bLs}{\begin{Lemma*} }
\newcommand{\bLtext}[1]{\begin{Lemma}[#1] $ $}
\newcommand{\eL}{\end{lemma}}
\newcommand{\eLs}{\end{Lemma*}}
\newcommand{\bC}{\begin{Corollary}}
\newcommand{\eC}{\end{Corollary}}
\newcommand{\bp}{\begin{proof} $ $ \newline }
\newcommand{\bptext}[1]{\begin{proof} #1 \newline }
\newcommand{\ep}{\end{proof}}
\newcommand{\bDtext}[1]{\begin{Definition}[#1] }
\newcommand{\eD}{\end{Definition}}
\newcommand{\bN}{\begin{Note}}
\newcommand{\bNs}{\begin{Note*}}
\newcommand{\eN}{\end{Note}}
\newcommand{\eNs}{\end{Note*}}



\newcommand{\br}{{\bf r}}

\newcommand{\bz}{{\bf z}}


\newcommand{\err}{{e}}

\newcommand{\sigT}{{\sigma}}
\newcommand{\sigS}{{\sigma_S}}
\newcommand{\sigA}{{\sigma_A}}

\newcommand{\sigThalf}{{\sigma^{1/2}}}

\newcommand{\Q}{Q}



\newcommand{\mcK}{\mathcal{K}}
\newcommand{\mcL}{\mathcal{L}}
\newcommand{\mcM}{\mathcal{M}}
\newcommand{\mcN}{\mathcal{N}}

\newcommand{\mcP}{\mathcal{P}}

\newcommand{\mcT}{\mathcal{T}}

\newcommand{\mcLhalf}{\mathcal{L}^{1/2}}

\newcommand{\Stwo}{{\mathbb{S}^2}}


\usepackage{soul}


\newcommand{\dint}[4]{\displaystyle{\int^{#1}_{#2} #3 \; \textrm{d}#4}}
\newcommand{\linedint}[4]{\displaystyle{\int^{#1}_{#2} #3 \; \textrm{d}l(#4)}} 



\newcommand{\norm}[1]{\left\| #1 \right\|}

\newcommand{\bexp}[1]{\exp{\left( #1 \right)}}




\newcounter{algcounter}

\begin{document} 

\title*{The radiative transport equation with heterogeneous cross-sections  }
\author{J.C.H. Blake, I.G. Graham, F. Scheben  and A. Spence }
\institute{J.C.H. Blake\at IBM, Hursley Park,  Winchester SO21 2JN, UK  \email{jackcheethamblake@gmail.com}
\and I.G. Graham  \at  Mathematical Sciences, University of Bath, BA2 7AY, UK  \email{I.G.Graham@bath.ac.uk} 
\and F. Scheben \at  Am Vogelbusch 25, 24568 Kattendorf, Germany 
   \email{F.Scheben@gmail.com} 
\and A. Spence   \at  Mathematical Sciences, University of Bath, BA2 7AY, UK  \email{A.Spence@bath.ac.uk}}
%
%

\titlerunning{Heterogeneous radiative transport equation}
\maketitle

\abstract{We consider the classical integral equation reformulation of the radiative transport equation (RTE) in
a   heterogeneous  medium, assuming isotropic scattering. We prove an estimate for the norm of the integral operator in this formulation which is explicit in the (variable) coefficients of the problem (also known as the cross-sections). This result uses only elementary properties of the transport operator and
  some classical functional analysis. As a corollary,  we  obtain a bound on the
  convergence rate of source iteration (a classical stationary iterative method for solving the RTE). We also obtain
  an estimate for the solution of the RTE which is explicit in its dependence on
  the cross-sections. The latter can be used to estimate the  solution in certain Bochner norms 
  when the cross-sections are random fields.
  Finally we use our results to give an elementary proof  that the generalised eigenvalue problem arising in
  nuclear reactor safety has only real and positive eigenvalues.    
}

\section{Introduction}
\label{sec:RTE}
In this note we present some elementary estimates  for the 
steady-state mono-energetic Radiative (Boltzmann or Neutron) Transport Equation (RTE) with
heterogeneous cross-sections.  Although these are relatively straightforward to prove, and there is a huge
literature on this topic, we were unable to
find proofs of these precise results in the literature and so it seems useful to record them here.
We emphasise that there are many excellent classical references for the general field discussed here - for example \cite{BelGla:70, LeMi:84, DaLi:12}. 
The  estimates given here  provide the tools needed to estimate the solution of the RTE explicitly in terms of the data (the so-called `cross-sections'). They also allow an explicit estimate for the rate of convergence of source iteration in terms
of the scattering ratio.  These estimates have recently proved essential for the rigorous analysis of uncertainty
quantification techniques  for the RTE \cite{GrPaSc:18, GrPaSc:18a, Pa:18}.   Most of the estimates presented here appeared   in the University of Bath 
PhD theses of Fynn Scheben and Jack Blake \cite{Sch:11, Bl:16, GrSc:11}. An  application to problems with random
data appears in the more recent University of  Bath  thesis of Matthew Parkinson  \cite{Pa:18} - see also \cite{GrPaSc:18, GrPaSc:18a}.

For $\br \in V\subset \mathbb{R}^3$,  where $V$ is 
a bounded convex spatial domain   and    $\Omega \in \Stwo$, the unit sphere in 3D, and     
 assuming   isotropic scattering, the RTE    source problem takes the form
\be
\label{eq:3DTEfull}
	\Omega\cdot\nabla\psi(\br,\Omega) + \sigT(\br)\psi(\br,\Omega) \ = \ \dfrac{\sigS(\br)}{4\pi}\dint{}{\Stwo}{\psi(\br,\Omega')}{\Omega'} + \Q(\br), 
        \ee
        where  $\nabla$ denotes the gradient with respect to $\br$, $Q$ is the source  and $\sigT$,  $\sigS$ are, respectively,   the {\em total} and {\em scattering} cross sections that satisfy
\be
\label{eq:3DCrossSections}
\sigT(\br) = \sigS(\br) + \sigA(\br), \quad \forall \br \in V, 
\ee
where     $\sigA$ is the {\em absorption} cross section.  All cross-sections 
are  assumed to be pointwise bounded  above and below on $V$ by strictly positive constants, i.e.,  {for all} \   $\br \in V$, 
$$0 < (\sigma_S )_{\min} \leq \sigma_S(\br) \leq (\sigma_S )_{\max}, \quad  \ 0 < (\sigma_A )_{\min} \leq \sigma_A(\br) \leq (\sigma_A )_{\max}, $$
\text{and thus} $$  0 < (\sigma)_{\min} \leq \sigma(\br) \leq (\sigma)_{\max} \ ,     $$
which ensures that  $\Vert \sigS/\sigma \Vert_{L^\infty(V)} < 1$.

In \eqref{eq:3DTEfull}, the   {\em angular flux} $\psi$ is to be found, subject to given boundary conditions. Here we only consider
the {\em vacuum} boundary condition on the inflow boundary:  
\be
\label{eq:3DnonzeroBCs}
	\psi(\br,\Omega) = 0 , \quad \textrm{when} \quad \bn(\br)\cdot\Omega < 0, \quad \br \in \partial V,  
        \ee
        where $\bn$ denoted the ourward normal from $V$. 
        
Introducing the transport and averaging  operators:
\be
\label{eq:3DToperator}
	\mcT\psi(\br,\Omega) = \Omega\cdot\nabla\psi(\br,\Omega) + \sigT(\br)\psi(\br,\Omega) \quad \text{and} \quad 	\mcP(\cdot) = \dfrac{1}{4\pi}\dint{}{\Stwo}{(\cdot)}{\Omega}, 
        \ee
  \eqref{eq:3DTEfull} can be rewritten as  
\be
\label{eq:3DTEop}
	\mcT\psi(\br,\Omega) = \sigS(\br)\phi(\br) + \Q(\br),
        \ee
        where  $\phi = \mcP \psi$ is called the {\em scalar flux}.
        This  is to be solved,  subject to  \eqref{eq:3DnonzeroBCs},

        Throughout, $L^2(V)$  will denote the space of square integrable functions on $V$, with inner product denoted $\langle \cdot, \cdot\rangle$.
        For any uniformly positive and bounded weight function $w$ on $V$, we will use  
$L^2(V,w)$ to denote the space of functions $v$,  
for which $ \Vert v \Vert_{L^2(V, w)}^2   : = \int_V \vert v(\br)\vert^2 w(\br) \rd r < \infty$.  When $w \equiv 1$, this reduces to
the standard $L^2$ norm $\Vert v \Vert_{L^2(V)}^2 = \langle v, v\rangle$.  

Before studying \eqref{eq:3DTEfull}, \eqref{eq:3DnonzeroBCs}
it is useful to first consider the `pure transport problem' 
$\mcT  \psi = g$,  subject to boundary  condition   \eqref{eq:3DnonzeroBCs} 
and with source $g \in L^2(V)$.
If the scalar flux $\phi$ were known,  then \eqref{eq:3DTEop} would allow computation of  
$\psi(\br, \Omega)$
for all $\br$ and any fixed  $\Omega$ by  solving   a single transport equation with vacuum boundary condition.
Such `transport sweeps' are `easy' operations both in theory (the solution of the  transport problem can be written down using characteristics - see Lemma \ref{lem:1}),  and in numerical practice  (e.g.,  when  discontinuous Galerkin methods are used to discretise the transport operator, the resulting linear system  can usually be solved by a single  sweep through the elements).
This motivates the use of the `source iteration' for solving \eqref{eq:3DTEop}, which
computes a sequence of approximations to $\phi$,  starting with an initial guess $\phi^{0} $ and iterating by solving :
\begin{align} \label{eq:SI} \mcT \psi^{i+1}(\br, \Omega)  = \sigS \phi^{i}(\br) + Q(\br), \quad
\text{subject to} \ \eqref{eq:3DnonzeroBCs} \ \text{and}   \quad \phi^{i+1}: = \mcP \psi^{i+1}.  \end{align} 
The fact that this iteration is always well-defined and is
equivalent to a fixed point iteration for a certain self-adjoint weakly singular integral operator
is established in    the following lemma.

%

              \bL \label{lem:1} 
	Let $g \in L^2(V)$ and consider  the pure transport problem:  Solve 
	$$
		\mcT \psi(\br,\Omega) = g(\br),  \quad \br \in V,  \ \Omega \in \Stwo, 
	$$
	  for $\psi(\br, \Omega)$, 
        subject to  boundary condition \eqref{eq:3DnonzeroBCs}.
        This problem has a unique solution given by 
\begin{align} \psi(\br, \Omega) \ = \ \int_0^{d(\br, \Omega)} \exp(-\tau(\br, \br - s \Omega)) g (\br - s \Omega) \rd s,  
\label{char} \end{align} 
where 
	$	\tau(\br,\br^\prime)  =   \linedint{}{l(\br,\br^\prime)}{\sigT(\bz)}{\bz}\ $ 
        (the integral of $\sigma$
        along the  line  $l(\br,\br^\prime)$ joining  $\br$ and  $\br^\prime$) and
         $$d(\br , \Omega) = \inf\{ s > 0: \br-s\Omega \not \in V\}. $$

        Moreover, 
         the correponding scalar flux  $\phi : = \mcP \psi$ can be
        expressed as 
        	\be
	\label{eq:3D_scalar_flux}
		\phi(\br) : =  (\mcK g)(\br),   
	\ee
	where $\mcK$ is the integral operator defined by 
	\be
        (\mcK v)(\br) \ :=\  \dint{}{V}{k(\br,\br^\prime)v(\br^\prime)}{\br^\prime},
        \quad \text{with kernel} \quad 		k(\br,\br^\prime) \ := \ \dfrac{\bexp{-\tau(\br,\br^\prime)}}{4\pi \norm{\br - \br^\prime}_2^2} .         \ee

                 \eL

                 \begin{proof}
                   It is easy to show  (using the method of  characteristics)  that the formula \eqref{char} provides the unique solution to the pure transport problem.
                This is well-known in the neutron transport literature, e.g. \cite{BelGla:70, PrLa:10} and    
                   $\tau$    is called the {\em optical path length}.                     
Once \eqref{char} is established, the formula \eqref{eq:3D_scalar_flux} is obtained by applying $\mcP$ to each side of \eqref{char}
and rewriting the result using  spherical
polar coordinates.     
\end{proof}

\begin{remark} An interesting observation is that, although the original transport problem is far from being self-adjoint, the integral operator $\mcK$ is self-adjoint
  with a positive kernel (and in fact is a  positive definite operator, as we see below). Since $\mcK$ has a
  weakly singular kernel and the domain of integration is a bounded  Euclidean domain,   it is to be expected that the solution $\phi$ will have (weak) boundary singularities. This
  property has been analysed in the classical literature 
   - see, e.g.,    \cite{PiSc:83} and the references therein.
  \end{remark} 

  Returning to source iteration \eqref{eq:SI}, we have the following simple corollary:

  \begin{corollary} \label{cor:SI} The iterates $\phi^i$ and the errors $e^i := \phi - \phi^i$ satisfy the equations
    \begin{align} \phi^{i+1} \ &=\ \mcK \sigma_S \phi^i + \mcK Q \nonumber \\
      \text{and} \quad e^{i+1} \ &= \ \mcK \sigma_S e^i  \label{eq:errEqn}
      \end{align} 
    \end{corollary}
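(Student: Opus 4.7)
The plan is to derive both identities as immediate consequences of Lemma \ref{lem:1} applied to the transport sweep performed at each step of the source iteration \eqref{eq:SI}. The key observation is that Lemma \ref{lem:1} gives an explicit expression for $\mcP$ applied to the solution of any pure transport problem with vacuum inflow data, so the algorithmic recursion \eqref{eq:SI} can be transformed into a recursion purely in terms of the scalar flux iterates, with the integral operator $\mcK$ doing the bookkeeping.

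First I would set $g^{i} := \sigma_S \phi^{i} + Q$. Provided $\phi^{i} \in L^2(V)$ (which is maintained inductively, starting from an initial guess $\phi^0 \in L^2(V)$ and using the boundedness of $\sigma_S$ together with the $L^2$-boundedness of $\mcK$ implicit in \eqref{eq:3D_scalar_flux}), we have $g^i \in L^2(V)$, so Lemma \ref{lem:1} applies to the pure transport problem $\mcT \psi^{i+1} = g^i$ with boundary condition \eqref{eq:3DnonzeroBCs}. The lemma delivers existence, uniqueness, and the explicit formula $\mcP \psi^{i+1} = \mcK g^i$. By definition $\phi^{i+1} := \mcP \psi^{i+1}$, and linearity of $\mcK$ gives
$$\phi^{i+1} = \mcK(\sigma_S \phi^i + Q) = \mcK \sigma_S \phi^i + \mcK Q,$$
which is the first claim.

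For the error equation, I would observe that the exact scalar flux $\phi$ itself satisfies a fixed point version of the same identity. Indeed, rewriting \eqref{eq:3DTEop} as a pure transport problem with source $g = \sigma_S \phi + Q \in L^2(V)$ and applying Lemma \ref{lem:1} once more yields
$$\phi = \mcK \sigma_S \phi + \mcK Q.$$
Subtracting the recursion for $\phi^{i+1}$ from this identity, the $\mcK Q$ terms cancel and linearity gives
$$e^{i+1} = \phi - \phi^{i+1} = \mcK \sigma_S (\phi - \phi^i) = \mcK \sigma_S e^i,$$
which is the second claim.

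There is no genuine obstacle here; the only mild subtlety is the $L^2$-regularity needed to invoke Lemma \ref{lem:1} at each step, and this is handled by the standing assumption that $Q \in L^2(V)$, the essential boundedness of $\sigma_S$, and an induction argument that $\phi^i \in L^2(V)$ for all $i \geq 0$. Everything else is a direct rewriting using the linearity of $\mcT$, $\mcP$, and $\mcK$.
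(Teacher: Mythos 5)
Your proposal is correct and follows exactly the route the paper intends: the paper states this as an immediate consequence of Lemma \ref{lem:1} (which gives $\mcP\psi = \mcK g$ for any transport sweep) applied to each step of \eqref{eq:SI}, together with the fixed-point identity \eqref{eq:IE} for the exact flux, and then subtraction. Your added remark on maintaining $\phi^i \in L^2(V)$ inductively is a harmless refinement of the same argument.
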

                 



Hence  iteration \eqref{eq:SI}
will converge if and only if there is a norm in which the operator $\mcK \sigS$ is a  contraction.
(Here we emphasise that $\mcK \sigS$ denotes the composition of the operator of multiplication by $\sigS$ with the integral operator $\mcK$.)
In Theorem \ref{thm:1}  we prove that this is the case in  a certain  weighted $L^2$ norm induced by the total
cross-section $\sigT$.  Then Corollary \ref{cor:1} provides the result on the convergence of source iteration. 

In fact Theorem \ref{thm:1} has several other ramifications.  Combining Lemma \ref{lem:1} with \eqref{eq:3DTEop}   we obtain that the scalar $\phi$ satisfies the second kind weakly singular integral equation
\begin{align}\label{eq:IE} 
  \phi - \mcK \sigS \phi \ =\ \mcK Q , 
\end{align}
and Theorem \ref{thm:1} (and the Banach lemma) then readily tells us that this equation has a unique solution and provides a bound on its  norm explicit in the cross-sections
(Corollary \ref{cor:2}).

One reason for providing these results in this paper is that their proofs are hard to locate in the literature.
Another
reason is that they have direct relevance to the modern theory of unertainty quantification for the transport
equation. When the cross-sections $\sigma_S$ and $\sigma$ are random fields, then both the error estimates
for numerical methods for computing $\phi$ and also 
the rates of  convergence of iterative methods for computing realisations of the scalar flux $\phi$ depend expilcitly on   the cross-sections through the theorems presented here. This dependence is used explicitly
in recent work on UQ for transport problems \cite{GrPaSc:18, Pa:18,GrPaSc:18a}.   



It is known that source iteration converges when solving the neutron transport equation with
constant cross sections - see, e.g.,  \cite[Chapter 4]{Sch:11}.  Ashby et. al \cite[Section 4]{AshBroDorHin:95} prove a similar result with spatially dependent cross sections for a special discrete case. This work motivated us to consider a general  proof in the heterogeneous case. The results here are for the underlying operator before discretization. Extension to general discretizations is a complicated question.   

We will present the theory for the full 3D case where $\br \in V \subset \mathbb{R}^3$ and $\Omega \in \mathbb{S}^2$
the unit sphere in 3D, but the result also applies to the 2D reduction where $\br \in V \subset \mathbb{R}^2$ and $\Omega \in \mathbb{S}^1$  and to the case when space and angle are one-dimensional (the so called slab geometry case). Details of the proof in this case are given in \cite{Bl:16}. 


\section{The main result}
\label{sec:PreRes}

Our main goal in this section will be to prove the following theorem.

\begin{theorem} \label{thm:1} 
                For any function $\sigma^*$ satisfying $0 < \sigma^*_{\mathrm{min}} \leq \sigma^*(\br) \leq \sigma^*_{\mathrm{max}}$ for all $\br \in \overline{\Omega}$, 
  \begin{align} \label{eq:main}  \norm{\mcK \sigma^* }_{L^2(V, \sigma)} \ \leq \ \left \Vert \frac{\sigma^*}{\sigma} \right \Vert_{L^{\infty}(V)}.\end{align} 
\label{thm:NormBound}
\end{theorem}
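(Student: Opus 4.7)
The plan is to prove the bound by an energy estimate on the underlying transport equation rather than by working directly with the kernel $k(\br,\br')$. Given $v \in L^2(V,\sigma)$, set $g := \sigma^* v$ and let $\psi$ be the unique solution of the pure transport problem $\mathcal{T}\psi = g$ with vacuum inflow boundary condition \eqref{eq:3DnonzeroBCs}, as guaranteed by Lemma \ref{lem:1}. By that same lemma, its scalar flux satisfies $\mathcal{P}\psi = \mathcal{K}g = \mathcal{K}\sigma^* v =: \phi$, so our task reduces to bounding $\|\phi\|_{L^2(V,\sigma)}$ by $\|\sigma^*/\sigma\|_{L^\infty(V)}\|v\|_{L^2(V,\sigma)}$.

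The key step is to multiply the transport equation by $\psi$ itself and integrate over $V\times \Stwo$. Using $\psi\,\Omega\cdot\nabla\psi = \tfrac{1}{2}\Omega\cdot\nabla(\psi^2)$ and the divergence theorem in $\br$, the first-order term becomes a boundary integral
$\tfrac{1}{2}\int_{\partial V}\int_{\Stwo}\psi^2(\Omega\cdot\bn)\,\rd\Omega\,\rd s(\br)$.
The vacuum inflow condition kills the contribution from the inflow portion ($\Omega\cdot\bn<0$), leaving only the nonnegative outflow contribution, which we discard. This yields the inequality
\begin{equation*}
\int_V\!\!\int_{\Stwo}\sigma(\br)\,\psi(\br,\Omega)^2\,\rd\Omega\,\rd\br \ \le\ \int_V\!\!\int_{\Stwo}g(\br)\psi(\br,\Omega)\,\rd\Omega\,\rd\br \ =\ 4\pi\int_V \sigma^*(\br) v(\br)\phi(\br)\,\rd\br,
\end{equation*}
where in the last step we used that $g$ is independent of $\Omega$ and the definition of $\phi$.

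Next I would use the Cauchy--Schwarz inequality on $\Stwo$ in the form $\bigl(\int_{\Stwo}\psi\,\rd\Omega\bigr)^2 \le 4\pi\int_{\Stwo}\psi^2\,\rd\Omega$, i.e.\ $\int_{\Stwo}\psi^2\,\rd\Omega \ge 4\pi\,\phi^2$, to bound the left-hand side below by $4\pi\int_V\sigma\,\phi^2\,\rd\br = 4\pi\|\phi\|_{L^2(V,\sigma)}^2$. After dividing by $4\pi$ and writing $\sigma^* v\phi = (\sigma^*/\sigma)v\cdot \sigma\phi$ in the right-hand side, one more application of Cauchy--Schwarz (this time in $L^2(V,\sigma)$) gives
\begin{equation*}
\|\phi\|_{L^2(V,\sigma)}^2 \ \le\ \left\|\tfrac{\sigma^*}{\sigma}v\right\|_{L^2(V,\sigma)}\|\phi\|_{L^2(V,\sigma)} \ \le\ \left\|\tfrac{\sigma^*}{\sigma}\right\|_{L^\infty(V)}\|v\|_{L^2(V,\sigma)}\|\phi\|_{L^2(V,\sigma)}.
\end{equation*}
Cancelling one factor of $\|\phi\|_{L^2(V,\sigma)}$ (or noting the inequality is trivial if it vanishes) delivers \eqref{eq:main}. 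The main technical obstacle is justifying the integration by parts and the discarding of the outflow boundary term at the level of regularity provided by Lemma \ref{lem:1}; this is standard for first-order transport equations with vacuum inflow data and can be handled either by a density argument from smooth inflow data or by interpreting the boundary trace via the characteristic representation \eqref{char}.
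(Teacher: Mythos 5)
Your argument is correct, but it takes a genuinely different route from the paper's. The paper factors the problem through the symmetrised operator $\mcL = \sigma^{1/2}\mcK\sigma^{1/2}$: it first proves the quadratic-form bound $\langle g,\mcL g\rangle \le \Vert g\Vert_{L^2(V)}^2$ (Lemma \ref{lem:RayQuotBound}, via a separate manipulation of the transport equation), and then, because a form bound does not immediately control the operator norm, upgrades it using the self-adjoint positive square root $\mcL^{1/2}$ (Lemma \ref{lem:4}, invoking Riesz--Nagy) before finishing exactly as you do with the pointwise factor $\sigma^*/\sigma$. You instead prove in one stroke the stronger coercivity-type inequality $\Vert \mcK g\Vert_{L^2(V,\sigma)}^2 \le \langle g,\mcK g\rangle$: the energy identity (multiply $\mcT\psi=g$ by $\psi$, integrate, discard the nonnegative outflow term) is the same computation the paper uses to prove positive definiteness of $\mcK$ in Lemma \ref{lem:2}, but your extra ingredient --- Jensen/Cauchy--Schwarz on the sphere, $\int_{\Stwo}\psi^2\,\rd\Omega \ge 4\pi(\mcP\psi)^2$ --- converts the lower bound $\int_V\sigma\int_{\Stwo}\psi^2$ into $4\pi\Vert\phi\Vert_{L^2(V,\sigma)}^2$, after which a single weighted Cauchy--Schwarz and cancellation give \eqref{eq:main} directly. (For a self-adjoint positive $\mcL$ the inequality $\Vert\mcL h\Vert^2\le\langle h,\mcL h\rangle$ is equivalent to $\Vert\mcL\Vert\le 1$, so the two routes prove equivalent facts; yours reaches the norm bound without the functional-calculus detour.) What your approach buys is brevity and self-containedness --- no appeal to the existence of $\mcL^{1/2}$ --- while the paper's route has the advantage that Lemmas \ref{lem:RayQuotBound} and \ref{lem:4} are stated as reusable results and are indeed invoked again in Corollary \ref{cor:2} and in the eigenvalue analysis. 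Your closing caveat about justifying the integration by parts and the trace on $\partial V$ is at exactly the level of rigour the paper itself adopts in Lemmas \ref{lem:2} and \ref{lem:RayQuotBound}, so nothing further is needed there.
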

(The left hand side of the inequality in \eqref{eq:main} denotes the operator norm of 
$\mcK \sigma^*$ on the space  $L^2(V)$,  equipped with the weighted norm $\Vert \cdot \Vert_{L^2(V, \sigma)}$.)    




The proof depends on several lemmas.  In these it is useful to
introduce the operator $$\mcL : =  \sigThalf\mcK\sigThalf.$$

  \begin{lemma} \label{lem:2}  The operators  $\mcK$ and $
		\mcL
	$ are  compact, self-adjoint and positive definite on $L^2(V)$.

  \end{lemma}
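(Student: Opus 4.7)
My plan is to prove each of the three properties for $\mcK$ individually, then lift them to $\mcL = \sigThalf\mcK\sigThalf$ by observing that multiplication by $\sigThalf$ is a bounded, boundedly invertible, self-adjoint operator on $L^2(V)$ (thanks to the pointwise bounds on $\sigT$). Compactness is preserved under composition with bounded operators, and for any $v\in L^2(V)$ we have $\langle \mcL v, v\rangle = \langle \mcK(\sigThalf v), \sigThalf v\rangle$, so both symmetry and positivity transfer from $\mcK$ to $\mcL$ automatically.

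Self-adjointness of $\mcK$ is the quickest step: the optical path $\tau(\br,\br')$ is defined as an integral of $\sigT$ along the segment joining $\br$ and $\br'$, hence $\tau(\br,\br') = \tau(\br',\br)$, which makes the kernel $k(\br,\br')$ symmetric in its arguments. Combined with boundedness on $L^2(V)$, this yields $\mcK = \mcK^*$.

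For compactness, the kernel is dominated pointwise by $(4\pi\|\br-\br'\|_2^2)^{-1}$, which is weakly singular; its \emph{square}, however, is not integrable on $V\times V$ in three dimensions, so $\mcK$ is not Hilbert--Schmidt. My plan is to approximate $\mcK$ by the cut-off operator $\mcK_\epsilon$ whose kernel is set to zero on $\{\|\br-\br'\|_2<\epsilon\}$. Since $V$ is bounded and the cut-off kernel is bounded away from the diagonal, $\mcK_\epsilon$ is Hilbert--Schmidt and hence compact. A Schur test on $k-k_\epsilon$, evaluated in spherical polar coordinates, gives $\|\mcK - \mcK_\epsilon\|_{L^2(V)\to L^2(V)} \leq \epsilon$, exhibiting $\mcK$ as a norm limit of compact operators.

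Positive definiteness is the substantive step, and my plan is to bypass the integral operator in favour of the underlying transport problem. Given $g\in L^2(V)$, let $\psi$ be the unique solution of $\mcT\psi = g$ with vacuum inflow BC supplied by Lemma \ref{lem:1}, so that $\mcK g = \mcP\psi$ and
\[ \langle \mcK g, g\rangle_{L^2(V)} \;=\; \frac{1}{4\pi}\int_V\int_{\Stwo}\psi(\br,\Omega)\,g(\br)\,\rd\Omega\,\rd\br. \]
Multiplying the transport equation by $\psi$ and integrating over $V\times\Stwo$, the streaming term becomes (via the divergence theorem in $\br$, since $\Omega$ is constant in $\br$) the boundary contribution $\tfrac{1}{2}\int_{\Stwo}\int_{\partial V}(\bn\cdot\Omega)\psi^2\,\rd s\,\rd\Omega$. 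The inflow portion $\bn\cdot\Omega<0$ is killed by the vacuum BC while the outflow portion is manifestly nonnegative, and the collision term contributes $\int_{V\times\Stwo}\sigT\psi^2 \geq 0$. Hence $\langle \mcK g, g\rangle \geq (4\pi)^{-1}\int_{V\times\Stwo}\sigT\psi^2$, which vanishes only if $\psi\equiv 0$, forcing $g = \mcT\psi = 0$. The main obstacle in the whole program is the compactness step, because the kernel genuinely fails to be Hilbert--Schmidt in three dimensions and one must route through a cut-off/Schur argument; the positivity argument, while substantive, falls out cleanly once the transport-energy identity is in place.
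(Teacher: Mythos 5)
Your proposal is correct, and the substantive part --- positive definiteness via the energy identity for the transport problem (multiply $\mcT\psi=g$ by $\psi$, integrate, apply the divergence theorem, and discard the boundary term using the vacuum condition) --- is exactly the argument the paper uses, as is the transfer to $\mcL$ by two-sided scaling with $\sigma^{1/2}$. The only real divergence is in the compactness step: the paper simply cites Kantorovich--Akilov for weakly singular operators of potential type, whereas you give a self-contained cut-off/Schur argument, correctly observing that the kernel fails to be Hilbert--Schmidt in 3D and that the truncated-kernel error is $O(\epsilon)$ in operator norm; this is essentially the standard proof behind the cited result, so it buys self-containedness at the cost of a little extra work. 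Your explicit remarks that self-adjointness follows from the symmetry $\tau(\br,\br')=\tau(\br',\br)$ of the optical path, and that strict definiteness holds because $\int_V\sigma\,\mcP(\psi^2)=0$ forces $\psi\equiv 0$ and hence $g=\mcT\psi=0$, are both correct and slightly more complete than what the paper writes down.
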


	
  \begin{proof}
    First, $\mcK$ is compact on $L^2(V)$
    because it is a weakly singular operator of potential type, see \cite[p.332]{KanAki:82}. To see the positive definiteness, let $g$ be an arbitrary function in $L^2(V)$ and let $\psi$ be the solution of $\mcT \psi = g$, subject to vacuum boundary conditions \eqref{eq:3DnonzeroBCs}. Then
    \begin{align*} \psi(\br, \Omega) g(\br) \ &= \ \psi(\br, \Omega ) \Omega \cdot  \nabla \psi(\br, \Omega) + \sigma \psi^2(\br, \Omega)\\
                                              & = \ \frac12 \nabla \cdot  (\Omega \psi^2(\br, \Omega) ) + \sigma(\br) \psi^2(\br, \Omega) .  \end{align*}
                                            Hence, integrating over $V$ and using the divergence theorem, we obtain 
                                            \begin{align} \int_V \psi(\br, \Omega) g(\br) \rd r \ &= \ \frac12 \int_{\partial V} \psi^2(\br, \Omega ) \Omega. \bn(\br) \rd s   + \int_V \sigma(\br)  \psi^2(\br, \Omega)\rd r \nonumber \\
                                                                                                  & \geq  \int_V \sigma(\br) \psi^2(\br, \Omega),  \label{daggar} \end{align}
                                                                                                where we used the vacuum boundary condition to get the final inequality in \eqref{daggar}. 
                                            Now introducing $\phi = \mcP \psi$, and recalling  \eqref{eq:3D_scalar_flux},  we also have $\mcP\psi = \phi = \mcK g $. Applying $\mcP$ to each side of \eqref{daggar}, we then obtain
                                            \begin{align*}
\langle g, \mcK g \rangle \ = \ 
                                              \int_V \phi(\br)  g(\br) \rd r \ 
                                              & \geq  \ \frac{1}{4\pi} \int_V \sigma(\br) \int_{\mathbb{S}^2} \psi^2(\br, \Omega) \rd \Omega \rd \br \ .  \end{align*}
                           This proves the postitive definiteness of $\mcK$. Since $\mcL$ is a simple left and right scaling of $\mcK$ with the positive-valued  function $\sigma^{1/2}$, the proof for $\mcL$ follows directly.                     


\end{proof} 

Our next result concerns an  upper bound on   $\mcL$.

\bL
	\label{lem:RayQuotBound}
	
	\bes
		\langle g, {\mcL g}\rangle \  \leq\ \Vert g \Vert_{L^2(V)}^2  , \quad \text{for all} \quad  g \in L^2(V) .
	\ees
        \eL
        
\begin{proof} 
  In a variation of  the proof of Lemma \ref{lem:2}, let  $g \in L^2(V)$, but  this time
  let $\psi$ be the solution of \begin{align} \label{eq:11} \mcT \psi = \sigma^{1/2} g , \end{align} subject to vacuum boundary conditions. Then  set $\phi = \mcP \psi$, implying that $\mcP \psi = \phi = \mcK (\sigma^{1/2} g )$. 
This time, applying $\mcP$ directly to \eqref{eq:11} and recalling that $g$ and $\sigma$ are both independent of $\Omega$, we get
\bes
	\mcP(\Omega\cdot\nabla\psi) + \sigT\phi = \sigThalf  g , 
\ees
and 
so \begin{align*} 
     \sigma^{1/2} \mcK \sigma^{1/2} g \ &= \ \sigma^{1/2} \phi = g - \sigma^{-1/2} \mcP (\Omega \cdot  \nabla \psi).   \end{align*}
   Multiplying each side of this relation by $g$ and integrating over $V$, we get 
\begin{align}\label{eq:12} 
  \langle g,  \mcL  g \rangle \ &= \ \langle g,g \rangle - \int_V \sigma^{-1/2}(\br)  g(\br) \mcP (\Omega \cdot  \nabla \psi(\br, \Omega)) \rd r.  \end{align}
Examining the second term on the right-hand side of  \eqref{eq:12},  we see that this may be written   
\begin{align}\label{eq:13}
  \frac{1}{4 \pi} \int_{\mathbb{S}^2} \int_V  \sigma^{-1/2}(\br)  g(\br) \Omega \cdot  \nabla \psi(\br, \Omega) \rd r \rd \Omega . 
\end{align}
Multiplying  \eqref{eq:11} by $\sigma^{-1}$, we obtain the formula 
$ \sigma^{-1/2} g = \psi + \sigma^{-1} \Omega. \nabla \psi$,
Using this in \eqref{eq:13} and then the divergence theorem again,  we see that \eqref{eq:13} is  
\begin{align*}
  & \frac{1}{4 \pi} \int_{\mathbb{S}^2} \int_V \left(\psi(\br, \Omega) \Omega \cdot  \nabla \psi (\br, \Omega) \rd \br \rd \Omega +
  \sigma^{-1} (\Omega \cdot  \nabla \psi(\br, \Omega) )^2\right)  \rd \br \rd \Omega \\
  & \mbox{\hspace{1cm}} \ \geq\  \frac{1}{8 \pi} \int_{\mathbb{S}^2} \int_V \nabla \cdot  (\psi^2(\br, \Omega) \Omega)\rd \br \rd \Omega \ =\ \frac{1}{8 \pi} \int_{\mathbb{S}^2} \int_{\partial V}   \psi^2(\br, \Omega) \Omega \cdot  \bn (\br) \rd s  \rd \Omega, 
\end{align*}
the inner integral on the right-hand side being over the  surface $\partial V$. This
is non-negative because of the vacuum boundary conditions. Hence \eqref{eq:13} is non-negative and  
combining this   with \eqref{eq:12}, we obtain the result. 
\end{proof}

\bL \label{lem:4} 

	\bes
      \Vert \mcL g \Vert_{L^2(V)}^2 \ = \ \langle {g}, {\mcL^2 g} \rangle \  \leq \ \Vert g \Vert_{L^2(V)}^2 , \quad \text{for all} \quad  g \in L^2(V) .
	\ees
\label{lem:RayQuotRelation}
\eL
\begin{proof}
By \cite[Chapter 104]{RieNag:55}
	the positive-definite self-adjoint operator $\mcL$ possesses a unique positive-definite self-adjoint square root, $\mcL^{1/2}$. 
	Take any $g \in L^2(V)$.  
        Then,  using the self-adjointness of $\mcLhalf$ and
        Lemma \ref{lem:RayQuotBound} (twice),   we obtain
        \begin{align*}\langle g, \mcL^2 g \rangle    \ & =\ \langle g, \mcL^{1/2} \mcL \mcL^{1/2}  g \rangle \ =\ \langle \mcL^{1/2} g,  \mcL \mcL^{1/2}  g \rangle \leq \langle \mcL^{1/2}  g,   \mcL^{1/2}  g \rangle \\
& =\ \langle g,  \mcL   g \rangle \ \leq  \langle g,  g \rangle, 
        \end{align*}       
	as required.
\ep
Using the above results we are now in a position to prove the main theorem.\\

\ \ 




\noindent {\em Proof of Theorem \ref{thm:NormBound}. }
 \ \  For   any $g \in L^2(V, \sigma)$, we have $g \in L^2(V)$ and we can write  $$ \sigT^{1/2} \mcK \sigma^* g \ = \  \mcL  \frac{\sigma^*}{\sigT}
 \sigT^{1/2} g. $$

 Using Lemma \ref{lem:RayQuotRelation}, we then have 
 \begin{align*} \Vert \mcK \sigma^* g \Vert_{L^2(V, \sigma)} \  & =  \Vert \sigma^{1/2} \mcK \sigma^* g \Vert_{L^2(V)} \  = \ \Vert \mcL (\sigma^*/\sigma) \sigma^{1/2}  g \Vert_{L^2(V)} \ \\
    & \leq \
  \Vert (\sigma^* / \sigma) \sigma^{1/2} g \Vert_{L^2(V)} \ \leq \ 
  \Vert \sigma^* / \sigma \Vert_{L^\infty (V)} \Vert g \Vert_{L^2(V, \sigma)}, \end{align*} 
  and the result follows.

  \begin{remark}
    Although we have given the proof here only in the  3D case, the same result holds for classical
    2D and 1D model problems. For example in the 1D ``slab geometry''
    case formulated on the unit interval, the transport equation is
    $$ \mu \frac{\rd \psi }{\rd x}(x, \mu) + \sigma (x) \psi(x, \mu) =  \frac 12 \sigma_S(x)  \int_{-1}^1 \psi(x,\mu') \rd \mu' + Q(x)$$
    where $x \in (0,1)$ and $\mu  \in  (-1,1)$. The counterpart of the  intergral operator $\mcK$ is
    $$ \mcK g (x) = \frac 12 \int_0^1 E_1(\vert \tau(x,y)\vert ) g(y) \rd y , $$
    with $\tau$ denoting the optical path and $E_1$ the exponential integral.
    The counterpart of Theorem \ref{thm:1} for this case
    is proved  using almost identical arguments to those given above (see, e.g., \cite{Bl:16}).  
    \end{remark}

 \subsection{Some applications of Theorem \ref{thm:1} }


 \subsubsection{Convergence of source iteration} 
 From Theorem \ref{thm:1} and Corollary \ref{cor:SI} we immediately have the following result on the convergence of source iteration.
 
\begin{corollary} \label{cor:1} 
	Under the definitions above we have: 	
	\be
	\label{eq:PWconst_errorNormBound}
		\norm{\err^{i+1}}_{L^2(V,\sigma)} \ \leq \ \norm{\dfrac{\sigS}{\sigT}}_{L^\infty(V)}\norm{\err^{i}}_{L^2(V, \sigma)}.
	\ee
        Since, by \eqref{eq:3DCrossSections}, 
       $\norm{\dfrac{\sigS}{\sigT}}_{L^\infty(V)}  < \ 1$, we have $e^i \rightarrow 0$ as $ i \rightarrow \infty$. 	

\end{corollary}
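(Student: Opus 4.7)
The plan is to observe that Corollary \ref{cor:1} follows almost immediately by chaining together the two preceding results, so the main work is to verify carefully that Theorem \ref{thm:1} applies to the error recursion.

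First, I would invoke Corollary \ref{cor:SI}, which gives the error recursion $e^{i+1} = \mcK\sigma_S e^i$. The key observation is that $\sigma_S$ satisfies exactly the hypotheses on the generic function $\sigma^*$ in Theorem \ref{thm:1}: it is pointwise bounded above and below by strictly positive constants $(\sigma_S)_{\min}$ and $(\sigma_S)_{\max}$ on $V$. Therefore Theorem \ref{thm:1}, applied with $\sigma^* = \sigma_S$, gives the operator norm bound
\begin{equation*}
\|\mcK \sigma_S\|_{L^2(V,\sigma)} \ \leq\ \left\|\frac{\sigma_S}{\sigma}\right\|_{L^\infty(V)}.
\end{equation*}
Applying this to the error recursion yields the estimate \eqref{eq:PWconst_errorNormBound} directly.

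Second, to conclude $e^i \to 0$, I would argue that $\|\sigma_S/\sigma\|_{L^\infty(V)} < 1$ strictly. Pointwise, $\sigma_S(\br)/\sigma(\br) = \sigma_S(\br)/(\sigma_S(\br) + \sigma_A(\br))$, and using the uniform bounds on the cross-sections,
\begin{equation*}
\frac{\sigma_S(\br)}{\sigma(\br)} \ \leq\ \frac{(\sigma_S)_{\max}}{(\sigma_S)_{\max} + (\sigma_A)_{\min}} \ <\ 1,
\end{equation*}
where strict inequality uses $(\sigma_A)_{\min} > 0$. Hence the $L^2(V,\sigma)$-norm of the error decays geometrically with a common ratio strictly less than one, so $\|e^i\|_{L^2(V,\sigma)} \to 0$, and since $\sigma$ is uniformly positive this is equivalent to convergence in the unweighted $L^2(V)$-norm as well.

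There is no real obstacle here: the whole corollary is essentially a one-line consequence of Theorem \ref{thm:1} combined with Corollary \ref{cor:SI}. The only point requiring any care is recording that the bound is a \emph{strict} contraction, which rests on the standing assumption $(\sigma_A)_{\min} > 0$ built into the problem setup in \eqref{eq:3DCrossSections}.
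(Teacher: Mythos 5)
Your proposal is correct and follows exactly the paper's route: the paper likewise treats the corollary as an immediate consequence of the error recursion $e^{i+1} = \mcK\sigma_S e^i$ from Corollary \ref{cor:SI} together with Theorem \ref{thm:1} applied with $\sigma^* = \sigma_S$, with the strict bound $\Vert \sigma_S/\sigma\Vert_{L^\infty(V)} < 1$ already guaranteed by the standing assumptions on the cross-sections. Your explicit verification of that strict inequality via $(\sigma_A)_{\min} > 0$ is a welcome (if routine) detail the paper leaves implicit.
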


\begin{remark}
 A stronger estimate than \eqref{eq:PWconst_errorNormBound} can be obtained  in the case of constant cross-sections. In \cite{Bl:16} it was shown that on a spatial domain $V$ with diameter $d$  
 $$
 \norm{\err^{i+1}}_{L^2(V)} \ \leq \ \left(\frac{\sigS}{\sigT}\right)  (1 - \exp(-\sigma d)) 
\norm{\err^{i}}_{L^2(V)}.
$$
So on small domains source iteration can still converge rapidly, even if the scattering ratio is close to $1$. 
       \end{remark}

\subsubsection{Data-explicit estimates for the RTE} 
The next corollary gives data-explicit estimates for the pure transport problem and for the RTE.

\begin{corollary}\label{cor:2}  (i) Consider the pure transport problem  $\mcT \psi = g$ with vacuum boundary conditions,   as in Lemma  \ref{lem:1},  and let $\phi$ be the corresponding scalar flux. Then
$$ \Vert \phi \Vert_{L^2(V)} \ \leq \
\frac{1}{\sigma_{\min}} 
\Vert g \Vert_{L^2(V)} .
  $$

(ii)   Consider the RTE  \eqref{eq:3DTEfull} with  vacuum boundary conditions \eqref{eq:3DnonzeroBCs} and let $\phi$ be the corresponding scalar flux. Then
$$ \Vert \phi \Vert_{L^2(V)} \ \leq \
\frac{1}{\sigma_{\mathrm{min}}}
\left(1 - \left\Vert \frac{\sigma_S }{ \sigma} \right\Vert_{L^\infty(V)} \right)^{-1}
\Vert Q \Vert_{L^2(V)} .
  $$

\end{corollary}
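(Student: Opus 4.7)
The plan is to derive both estimates by combining Theorem \ref{thm:1} with elementary conversions between the $L^2(V)$ and $L^2(V,\sigma)$ norms, using the uniform bounds $0 < \sigma_{\min} \leq \sigma(\br) \leq \sigma_{\max}$. The only nontrivial input for (i) is a clever application of Theorem \ref{thm:1} (the operator $\mcK$ itself does not immediately appear, only $\mcK$ composed with a multiplier), and for (ii) it is the Banach (geometric series) lemma applied to $I - \mcK \sigma_S$.

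For part (i), I would take $\phi = \mcK g$ (by Lemma \ref{lem:1}) and write the trivial identity $\mcK g = \mcK \sigma \cdot (g/\sigma)$. Applying Theorem \ref{thm:1} with the choice $\sigma^* = \sigma$ yields $\|\mcK\sigma\|_{L^2(V,\sigma)} \leq 1$, so
\begin{equation*}
\|\mcK g\|_{L^2(V,\sigma)} \ \leq\ \|g/\sigma\|_{L^2(V,\sigma)}.
\end{equation*}
Now I convert back and forth. On the left, $\sigma_{\min}\|\mcK g\|_{L^2(V)}^2 \leq \|\mcK g\|_{L^2(V,\sigma)}^2$. On the right,
\begin{equation*}
\|g/\sigma\|_{L^2(V,\sigma)}^2 \ =\ \int_V g^2/\sigma \, \rd\br \ \leq\ \frac{1}{\sigma_{\min}} \|g\|_{L^2(V)}^2.
\end{equation*}
Combining the two gives $\|\phi\|_{L^2(V)} \leq \sigma_{\min}^{-1}\|g\|_{L^2(V)}$ as required.

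For part (ii), I would start from the integral equation \eqref{eq:IE}, namely $\phi - \mcK\sigma_S \phi = \mcK Q$. Applying Theorem \ref{thm:1} with $\sigma^* = \sigma_S$ gives $\|\mcK \sigma_S\|_{L^2(V,\sigma)} \leq \|\sigma_S/\sigma\|_{L^\infty(V)} =: \rho < 1$ (by \eqref{eq:3DCrossSections}). The Banach lemma then yields $(I - \mcK\sigma_S)^{-1}$ bounded on $L^2(V,\sigma)$ with operator norm at most $(1-\rho)^{-1}$, so
\begin{equation*}
\|\phi\|_{L^2(V,\sigma)} \ \leq\ (1-\rho)^{-1}\|\mcK Q\|_{L^2(V,\sigma)}.
\end{equation*}
The factor $\|\mcK Q\|_{L^2(V,\sigma)}$ is bounded exactly as in part (i): the same calculation (with $g$ replaced by $Q$) gives $\|\mcK Q\|_{L^2(V,\sigma)} \leq \sigma_{\min}^{-1/2}\|Q\|_{L^2(V)}$. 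Converting the left-hand side to the unweighted norm via $\|\phi\|_{L^2(V)}^2 \leq \sigma_{\min}^{-1}\|\phi\|_{L^2(V,\sigma)}^2$ and multiplying the two $\sigma_{\min}^{-1/2}$ factors together produces the desired $\sigma_{\min}^{-1}(1-\|\sigma_S/\sigma\|_{L^\infty(V)})^{-1}\|Q\|_{L^2(V)}$.

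Neither step presents a real obstacle — everything reduces to bookkeeping between weighted and unweighted $L^2$ norms plus two invocations of Theorem \ref{thm:1} (once with $\sigma^* = \sigma$ for bounding $\mcK$, once with $\sigma^* = \sigma_S$ for the Neumann series). The only modest care is to pick $\sigma^* = \sigma$ in part (i) rather than trying $\sigma^* = 1$, which would yield a weaker $\sigma_{\max}^{1/2}/\sigma_{\min}^{3/2}$ constant instead of the clean $1/\sigma_{\min}$; the trick of multiplying and dividing by $\sigma$ before invoking the theorem is what produces the sharp form of the bound.
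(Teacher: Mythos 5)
Your proof is correct and follows essentially the same route as the paper: part (ii) is identical (integral equation \eqref{eq:IE}, Theorem \ref{thm:1} with $\sigma^*=\sigma_S$, Banach lemma, then the part-(i) bound for $\Vert \mcK Q\Vert_{L^2(V,\sigma)}$ and norm conversion). In part (i) your invocation of Theorem \ref{thm:1} with $\sigma^*=\sigma$ applied to $g/\sigma$ is just a repackaging of the paper's direct use of Lemma \ref{lem:4} via the identity $\sigma^{1/2}\mcK g=\mcL(\sigma^{-1/2}g)$, yielding the same intermediate estimate $\Vert\mcK g\Vert_{L^2(V,\sigma)}\le\Vert\sigma^{-1/2}g\Vert_{L^2(V)}$.
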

\begin{proof}
  (i)   By Theorem \ref{thm:1}, we have $\phi = \mcK g$, so  
  
  $$
  \Vert \phi\Vert_{L^2(V,\sigma)} \ = \ \Vert \sigma^{1/2} \mcK g  \Vert_{L^2(V)} \ = \ \Vert \mcL (\sigma^{-1/2} g) \Vert_{L^2(V)}
 \ \leq \  \Vert \sigma^{-1/2} g \Vert_{L^2(V)}    $$ 
 (using Lemma  \ref{lem:4}),    and this yields the result.

 (ii) By Theorem \ref{thm:1}, the operator $\mcK \sigS$ is a contraction on $L^2(V, \sigma)$ with norm bounded by
 $\Vert \sigS/\sigma \Vert_{L^\infty(V)}< 1$. Hence,  by \eqref{eq:IE} and the Banach Lemma,   we can write 
    $ \phi = (I - \mcK \sigma_S)^{-1} \mcK Q$, with 
    $$ \Vert \phi \Vert_{L^2(V, \sigma)} \ \leq \   \left(1 - \left\Vert \frac{\sigma_S }{ \sigma} \right\Vert_{L^\infty(V)} \right)^{-1} \Vert \mcK Q \Vert_{L^2(V,\sigma)}. $$
    Writing $\sigma^{1/2} \mcK Q = \mcL (\sigma^{-1/2} Q)$ and proceeding as in part (i), we obtain  (ii).  
    \end{proof}

    \begin{remark}  (i) The bounds in Corollary \ref{cor:2} provide the mechanism for estimating the flux $\phi$ in appropriate Bochner norms when the data $\sigma, \sigma_A, \sigma_S$ are random fields, i.e.,  when we wish to quantify how uncertainty in data propagates to uncertainty in the fluxes or in the criticality (see  the next subsection). Integrability in probability space of the right-hand sides in each of the
      estimates  (i) or (ii) above immediately implies the same integrability properties for the resulting flux.  
      In \cite{Pa:18,GrPaSc:18, GrPaSc:18a} this is worked out in detail and the theory
      of multilevel Monte Carlo methods for computing quantities of interest is presented for the RTE in one and two-dimensional models.    

      \end{remark} 
    \subsubsection{Spectral properties of the RTE}

    In the study of nuclear reactor stability, one is concerned with  the eigenvalues $\lambda$ of the
    generalised eigenproblem:
    \begin{align} \label{evp}
      \Omega \cdot \nabla \psi  + \sigma \psi \ = \ \sigma_S \phi + \lambda \sigma_F \phi, 
    \end{align}
    with vacuum boundary condition. Here (in this simplified model problem), 
    $\sigma_F $ is the fission cross-section which is also assumed bounded above and below on $V$ by positive constants,  
    and now  $$\sigma = \sigma_S + \sigma_F + \sigma_A .$$
    In fact one is concerned with the {\em fundamental}  eigenvalue of \eqref{evp}, the smallest in absolute value.
    The reactor is stable and efficient provided the fundamental eigenvalue is close to $1$. In this case the neutrons produced by fission balance the neutrons lost by scattering  and absorption. 

    It is a not completely obvious fact that the spectrum of the problem
    \eqref{evp} is in fact discrete,   real and bounded below by a positive number. This fact can be obtained from the elementary properties which we have derived above.
    \begin{corollary} 
      The eigenvalues of problem \eqref{evp} are    real and  positive.   
    \end{corollary}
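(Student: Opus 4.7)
The plan is to apply Lemma~\ref{lem:1} to reformulate \eqref{evp} as a compact operator eigenvalue problem on $L^2(V)$, then to identify the non-zero spectrum of that operator with the spectrum of a symmetric positive-definite generalised eigenproblem whose eigenvalues are manifestly real and positive.

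Writing \eqref{evp} as $\mathcal{T}\psi = (\sigma_S + \lambda\sigma_F)\phi$ and applying Lemma~\ref{lem:1} (exactly as in the derivation of \eqref{eq:IE}) yields the integral equation $(I - \mathcal{K}\sigma_S)\phi = \lambda \mathcal{K}\sigma_F\phi$ for the scalar flux $\phi = \mathcal{P}\psi$. By Theorem~\ref{thm:NormBound}, $\|\mathcal{K}\sigma_S\|_{L^2(V,\sigma)} \le \|\sigma_S/\sigma\|_{L^\infty(V)} < 1$, so $I - \mathcal{K}\sigma_S$ is invertible; in particular $\lambda = 0$ is excluded, and the eigenvalues of \eqref{evp} are precisely the reciprocals of the non-zero eigenvalues of the compact operator $(I - \mathcal{K}\sigma_S)^{-1}\mathcal{K}\sigma_F$. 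Conjugating by $\sigma^{1/2}$ as in the proof of Theorem~\ref{thm:NormBound} and setting $u = \sigma^{1/2}\phi$, this becomes $(I - \mathcal{L}a)u = \lambda \mathcal{L}b\,u$ on $L^2(V)$ with $a := \sigma_S/\sigma$ and $b := \sigma_F/\sigma$, both pointwise positive and bounded.

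Next I would symmetrise by introducing $H := \mathcal{L}^{1/2}a\mathcal{L}^{1/2}$ and $G := \mathcal{L}^{1/2}b\mathcal{L}^{1/2}$, where $\mathcal{L}^{1/2}$ is the unique positive self-adjoint square root of $\mathcal{L}$ (which exists by Lemma~\ref{lem:2}). Both $H$ and $G$ are self-adjoint on $L^2(V)$; the estimate $\langle Hu, u\rangle \le \|a\|_\infty\langle \mathcal{L}u, u\rangle \le \|a\|_\infty\|u\|^2$, using Lemma~\ref{lem:RayQuotBound}, shows $\|H\|\le\|a\|_\infty < 1$, so $I - H$ is positive definite and boundedly invertible; and $G$ is positive definite because $b > 0$ pointwise and $\mathcal{L}^{1/2}$ is injective. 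From the algebraic identity $\mathcal{L}a\mathcal{L}^{1/2} = \mathcal{L}^{1/2}H$ I obtain the intertwining $(I - \mathcal{L}a)\mathcal{L}^{1/2} = \mathcal{L}^{1/2}(I - H)$, and hence $(I - \mathcal{L}a)^{-1}\mathcal{L}^{1/2} = \mathcal{L}^{1/2}(I - H)^{-1}$. Writing
\[
(I - \mathcal{L}a)^{-1}\mathcal{L}b \ =\ \mathcal{L}^{1/2}\bigl[(I - H)^{-1}\mathcal{L}^{1/2}b\bigr]
\]
and interchanging the two factors gives $(I - H)^{-1}\mathcal{L}^{1/2}b\,\mathcal{L}^{1/2} = (I - H)^{-1}G$, so by the standard fact that $AB$ and $BA$ share the same non-zero spectrum, the non-zero eigenvalues of $(I - \mathcal{L}a)^{-1}\mathcal{L}b$ coincide with those of $(I - H)^{-1}G$.

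Finally, $(I - H)^{-1}G$ is similar, via the bounded invertible self-adjoint operator $(I - H)^{1/2}$, to $(I - H)^{-1/2}G(I - H)^{-1/2}$, which is self-adjoint, compact, and positive definite (as the conjugation of positive definite $G$ by the bounded invertible self-adjoint $(I - H)^{-1/2}$). Its non-zero spectrum therefore lies in $(0,\infty)$, so $1/\lambda$, and hence $\lambda$, is real and strictly positive. The one genuinely delicate point is that $\mathcal{L}^{1/2}$ is compact with no bounded inverse, which obstructs a direct similarity transformation $u = \mathcal{L}^{1/2}w$; the identity $\mathrm{spec}(AB)\setminus\{0\} = \mathrm{spec}(BA)\setminus\{0\}$ sidesteps this technicality cleanly.
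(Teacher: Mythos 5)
Your argument is correct, and it reaches the same destination as the paper --- exhibiting $1/\lambda$ as a non-zero eigenvalue of a compact, self-adjoint, positive-definite operator --- but by a genuinely different symmetrisation. The paper multiplies the integral equation by $\sigma_S^{1/2}$ rather than $\sigma^{1/2}$, which makes the resulting operator $\mcL_{\sigma_S}=\sigma_S^{1/2}\mcK\sigma_S^{1/2}$ self-adjoint \emph{immediately}; it is a contraction by Lemma~\ref{lem:4} (via $\mcL_{\sigma_S}v=(\sigma_S/\sigma)^{1/2}\mcL((\sigma_S/\sigma)^{1/2}v)$), so $\mcM=(I-\mcL_{\sigma_S})^{-1}\mcL_{\sigma_S}$ is self-adjoint and positive definite by the spectral theorem, and the only remaining asymmetry, the weight $\sigma_F/\sigma_S$, is removed by a single conjugation $w=(\sigma_F/\sigma_S)^{1/2}v$. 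You instead scale by $\sigma^{1/2}$, which leaves the non-self-adjoint operator $\mcL a$ and forces you to introduce $\mcL^{1/2}$, prove the intertwining $(I-\mcL a)\mcL^{1/2}=\mcL^{1/2}(I-H)$, and invoke $\mathrm{spec}(AB)\setminus\{0\}=\mathrm{spec}(BA)\setminus\{0\}$ to pass to $(I-H)^{-1}G$ --- all of which is sound (your norm bound $\|H\|\le\|a\|_{\infty}<1$ via Lemma~\ref{lem:RayQuotBound} and your injectivity argument for positive definiteness of $G$ both hold, and you correctly flag why a naive similarity via the non-invertible $\mcL^{1/2}$ would fail), but it costs you an extra layer of operator algebra that the paper's choice of scaling avoids. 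What your route buys in exchange is a template that does not depend on $\sigma_S$ being the ``right'' weight: the same $H$, $G$, $AB$-versus-$BA$ mechanism would handle more general weight pairs, whereas the paper's shortcut exploits the specific structure $\sigma_S+\lambda\sigma_F$. One small caveat: your phrase ``precisely the reciprocals'' claims both directions of the correspondence between eigenvalues of \eqref{evp} and of $(I-\mcK\sigma_S)^{-1}\mcK\sigma_F$, but only the forward direction (eigenpair of \eqref{evp} with $\phi\ne 0$, which holds since $\phi=0$ forces $\psi=0$, gives a non-zero eigenvalue of the integral operator) is needed and justified; this does not affect the conclusion.
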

    \begin{proof}
      Let $ (\lambda, \psi) $ be an eigenpair of   \eqref{evp}. Then, as in \eqref{eq:IE}, 
      \begin{align} \nonumber  (I - \mcK \sigma_S ) \phi \ = \ \lambda \mcK \sigma_F \phi .   \end{align}
      Multiplying through by $\sigS^{1/2}$ and setting  $v = \sigma_S^{1/2}  \phi$,  we have
      \begin{align} \label{evp1} (I - \mcL_{\sigma_S} ) v \ = \ \lambda \mcL_{\sigma_S} \left(\left(\frac{\sigma_F}{\sigma_S}\right) v\right),  \end{align} 
      where
      $\mcL_{\sigma_S} : = \sigma_S^{1/2} \mcK \sigma_S^{1/2} $.
      Now, since 
      $ \mcL_{\sigma_S} v  =  (\sigma_S/\sigma )^{1/2} \mcL ((\sigma_S/\sigma )^{1/2}   v)$, we can use  
      Lemma \ref{lem:4} to obtain
      $$ \Vert \mcL_{\sigma_S} v\Vert_{L^2(V)} \ \leq \ \left\Vert \frac{\sigma_S}{\sigma} \right\Vert_{L^\infty(V)} \ \Vert v \Vert_{L^2(V)}.     $$ Since $\Vert \sigS  /\sigma\Vert_{L^\infty(V)} < 1  $, $\mcL_{\sigS}$ is  a contraction on $L^2(V)$, and
      $I - \mcL_{\sigma_S}$ is an invertible operator. Thus  $\lambda $ cannot vanish in \eqref{evp1} (since if $\lambda = 0$, then $v = 0$, which implies $\phi = 0$ and hence
      by \eqref{evp}, $\psi = 0$). Hence
      \eqref{evp1} is equivalent to 
\begin{align} \label{evp2} \frac{1}{\lambda} v   \ = \ \mcM  \left(\left(\frac{\sigma_F}{\sigma_S}\right) v\right),  \end{align} 
where $\mcM = (I -  \mcL_{\sigma_S})^{-1} \mcL_{\sigma_S}$.  It is easy to see that $\mcM $ is self-adjoint and compact and has eigenmalues $(1-\mu)^{-1} \mu $ where $\mu$ denotes an  eigenvalue of $\mcL_{\sigma_S}$. Since $\mu$ is always positive and less than $1$ it follows that the eigenvalues of $\mcM$ are all positive and so $\mcM$ is positive definite.      
 Then setting $w = (\sigma_F/\sigma_S)^{1/2} v = \sigma_F^{1/2} \phi$, we have 
\begin{align} \label{evp2} \frac{1}{\lambda} w   \ = \ \mcN  w,  \end{align} 
where 
$$ \mcN = \left(\frac{\sigma_F}{\sigma_S}\right)^{1/2} \mcM \left(\frac{\sigma_F}{\sigma_S}\right)^{1/2}. $$
Since $\mcN$ is also self-adjoint and positive definite, the result follows.

\end{proof}

\begin{remark}
  (i) A more sophisticated argument based on the Krein-Rutman theorem can be used to show that the fundamental eigenvalue is simple with a positive eigenfunction.\\
  (ii) The eigenvalue problem is discussed in detail in the fundamental reference \cite{DaLi:12}. 
  (iii) The quantity $1/\lambda$ is called ``$k-$effective'' in the nuclear engineering literature. 
  \end{remark}

  \bigskip
  
  \noindent {\bf Acknowledgement}\ \ We thank Professor Paul Smith (Wood plc., Poundbury, Dorset, UK) for many helful
  discussions over many years' collaboration, 
  and for supporting the PhD theses of Fynn Scheben, Jack Blake and Matt Parkinson,
  whose work is partially reported here.
  
\bibliographystyle{plain}

\end{document}